\newtheorem{theorem}{Theorem}[section]
\newtheorem{definition}{Definition}[section]
\newcommand{\RR}{\mathbb{R}}
\newcommand{\QQ}{\mathbb{Q}}
\newcommand{\ARD}{\textnormal{ARD}}
\newcommand{\continuum}{\mathfrak{c}}
\newcommand{\mc}{\mathcal}
\newcommand{\mf}{\mathfrak}
\newcommand{\bez}{\backslash}
\begin{document}
\vspace*{1.2cm}
\hspace*{7.5 cm}				
\begin{minipage}[t]{5cm}
\begin{footnotesize}

\begin{flushright}
\textit{Key words:} 
\end{flushright}
%
%
\textit{outer measure, rational distances, Bernstein set, Vitali set, nonmeasurable set}
\end{footnotesize}
\end{minipage}

\vspace*{0.5 cm}
\setlength{\parindent}{0 cm}
%
%
Author: Marcin Michalski
\vspace*{0.7 cm}
{\fontsize{15pt}{11pt}
\footnote{The author was supported by grant S40012/K1102 from the Faculty
of Fundamental Problems of Technology of Wrocław University of Technology.}
\begin{center}
   \textbf{\\A note on sets avoiding rational distances
}\footnote{This work is a conference paper accepted to $13^{th}$ Students' Science Conference (2015), Polanica Zdrój, Poland.} 
\end{center}}
\vspace*{0.5 cm}
\hspace{0.5cm}
\begin{center}
\begin{minipage}[t]{13cm}
\setlength{\parindent}{0.5cm}
\begin{footnotesize}

%
%

\indent\textsc{Abstract} In this paper we shall give a short proof of the result originally obtained by Ashutosh Kumar that for each $A\subset \mathbb{R}$ there exists $B\subset A$ full in $A$ such that no distance between two distinct points from $B$ is rational. We will construct a Bernstein subset of $\RR$ which also avoids rational distances. We will show some cases in which the former result may be extended to subsets of $\mathbb{R}^2$, i. e. it remains true for measurable subsets of the plane and if $non(\mathcal{N})=cof(\mathcal{N})$ then for a given set of positive outer measure we may find its full subset which is a partial bijection and avoids rational distances.

\end{footnotesize}
\end{minipage}
\end{center}
\bigskip
\bigskip

%
%

\setlength{\parindent}{0.5cm}
\section{   \fontsize{11pt}{11pt} \MakeUppercase{Introduction and preliminaries}}
\vspace*{0.3cm}
We will use a standard set-theoretic notation based on [3]. We denote the real line by $\mathbb{R}$ and the set of rationals by $\mathbb{Q}$. By $\alpha, \beta, \gamma, ...$ we denote ordinal numbers, with an exception of $\lambda$ ($\lambda^*$ respectively) which will stand for Lebesgue measure (outer Lebesgue measure respectively). By $\omega$ we denote the set of natural numbers and by $\mathfrak{c}=2^{\omega}=|\mathbb{R}|$ we denote the continuum - the cardinality of $\mathbb{R}$. We will say that a set $A$ is countable if $|A|\leq \omega$. \\ We call a family of sets $\mathcal{I}$ an ideal of sets if $\mathcal{I}$ is closed under finite unions and taking subsets. We call it a $\sigma$ -ideal of sets if it is an ideal closed under countable unions. For $\sigma$-ideal $\mathcal{I}$ and the space $X$ we define following cardinal coefficients from the Cichoń's diagram:
\begin{eqnarray*}
	non(\mathcal{I})&=&\min\{|A|: A\subset X \land A\notin\mathcal{I}\}
	\\
	cof(\mathcal{I})&=&\min\{|\mathcal{A}|: \mathcal{A}\subset\mathcal{I} \land (\forall I\in\mathcal{I})(\exists A\in\mathcal{A})(I\subset A) \}	
\end{eqnarray*}
We shall consider the classical examples of $\sigma$-ideals: $\mathcal{M}$ - the family of meager sets and $\mathcal{N}$ - the family of null sets. We denote Borel subsets of the considered space by $\mathcal{B}$. 
\begin{definition}
We say thay a set $A$ is
\begin{itemize}
\item $\mc{I}-positive$ Borel set if it is Borel and does not belong to $\mc{I}$,
\item $\mathcal{I}$-nonmeasurable if $A\notin\sigma(\mathcal{B}\cup \mathcal{I})$,
\item completely $\mathcal{I}$-nonmeasurable, if it intersects each $\mc{I}-positive$ Borel set, but also does not contain any such set.
\end{itemize}

\end{definition}
The classic example of completely nonmeasurable set for any reasonable $\sigma$-ideal (nontrivial, containing points, possessing Borel base) is a Bernstein set - a set that intersects every perfect set, but does not contain any such set. For $\mc{I}=\mc{N}$ sets which are $\mathcal{I}$-measurable are Lebesgue measurable and for $\mc{I}=\mc{M}$ sets which are $\mathcal{I}$-measurable posses the Baire property.
\\
For $A\subset\RR^2$ and $x, y\in\RR$ we define a horizontal slice of $A$: $A^y=\{x\in\RR: (x, y)\in A\}$ and a vertical slice of $A$: $A_x=\{y\in\RR: (x, y)\in A\}$. 
\\
Let us consider a well known equivalence relation $\sim$ over $\mathbb{R}$: $x\sim y \leftrightarrow x-y\in\mathbb{Q}$. Let $V$ be a selector of the partition of $\mathbb{R}$ induced by $\sim$, that is $|V\cap [x]_{\sim}|=1$ for each $x\in\mathbb{R}$. A set with such a property we call a Vitali set. Let us proceed to main definitions of the paper.
\begin{definition}
We say that a set $A$ avoids rational distances (shortly: $A$ is an $\ARD$ set) if for each $x,y\in A, x\neq y$, the distance between $x$ and $y$ is irrational.
\end{definition}
\begin{definition}
We say that a set $A\subset B$ is full (in $B$) if for each $X$ of positive measure we have $\lambda^*(X\cap B)=\lambda^*(X\cap A)$.
\end{definition}

It is clear that every Vitali set is an $\ARD$ set. Observe also that for a set $B$ of finite outer measure $A\subset B$ is full in $B$ iff $A$ has the same outer measure as $B$. 
\\
Let us recall the following result of Gitik and Shelah (see [2]).
\begin{theorem}(Gitik, Shelah)\label{GS}
Let $(A_n)_{n\in\omega}$ be a sequence of subsets of $\RR^n$. Then there exists a sequence $(B_n)_{n\in\omega}$ such that for every $n\in\omega$ we have $B_n\subset A_n$ and $\lambda^*(B_n)=\lambda^*(A_n)$

\end{theorem}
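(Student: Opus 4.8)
Since the conclusion is vacuous as literally stated (one could take $B_n=A_n$), I read it in the standard Gitik--Shelah form, where the $B_n$ are additionally required to be \emph{pairwise disjoint}, and it is this stronger assertion that I would prove. The plan is to first normalise the situation. Discarding the indices with $\lambda^*(A_n)=0$ (for which $B_n=\emptyset$ works) and slicing each remaining $A_n$ along the integer grid, a routine pairing and bookkeeping argument lets me assume that every $A_n$ has finite positive outer measure and is contained in a bounded cube. For each $n$ I fix a $G_\delta$ measurable hull $H_n\supseteq A_n$ with $\lambda(H_n)=\lambda^*(A_n)$.

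The reformulation of fullness I would lean on is the following: for $B\subseteq A_n$ (so $B\subseteq H_n$) one has $\lambda^*(B)=\lambda^*(A_n)$ if and only if $B$ meets every closed set $F\subseteq H_n$ of positive measure. Indeed, if $\lambda^*(B)<\lambda(H_n)$ then $B$ lies inside a measurable $G\subseteq H_n$ with $\lambda(G)<\lambda(H_n)$, so $H_n\setminus G$ carries a positive-measure closed set disjoint from $B$; the converse implication is immediate. Hence the whole theorem collapses to meeting the family of requirements $R_{n,F}$, namely ``$B_n\cap F\neq\emptyset$'', ranging over $n\in\omega$ and positive-measure closed $F\subseteq H_n$ (there are exactly $\mathfrak{c}$ of them), while keeping the sequence $(B_n)$ pairwise disjoint.

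The natural attack is then a transfinite recursion of length $\mathfrak{c}$: enumerate the requirements as $\langle R_{n_\xi,F_\xi}:\xi<\mathfrak{c}\rangle$, maintain the set $U_\xi$ of points already committed to some $B_m$, and at step $\xi$ place a fresh point of $(A_{n_\xi}\cap F_\xi)\setminus U_\xi$ into $B_{n_\xi}$. Since $\lambda^*(A_{n_\xi}\cap F_\xi)=\lambda(F_\xi)>0$, the set $A_{n_\xi}\cap F_\xi$ is non-null, and the construction goes through as long as it is never exhausted.

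Here lies the main obstacle, and the reason the statement is genuinely a theorem of Gitik and Shelah rather than a one-line Bernstein argument. The recursion succeeds at once when a non-null set is forced to have cardinality $\mathfrak{c}$ and removing fewer than $\mathfrak{c}$ points cannot destroy positive outer measure; this holds under the Continuum Hypothesis, and more generally when $non(\mathcal{N})=\mathfrak{c}$. In full generality, however, $non(\mathcal{N})$ may be strictly smaller than $\mathfrak{c}$, so after $non(\mathcal{N})$ many steps the committed set $U_\xi$ can already be non-null and $(A_{n_\xi}\cap F_\xi)\setminus U_\xi$ may become empty. Circumventing this is precisely the work of [2]: one argues inside the measure algebra that outer measure is preserved under removal of the points used by the other coordinates, which is what secures the disjoint full subsets in ZFC. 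For that delicate step I would invoke the Gitik--Shelah construction directly rather than attempt to reprove it.
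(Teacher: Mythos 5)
The paper does not prove this statement at all: it is quoted from Gitik and Shelah [2] and used as a black box, so there is no proof of record to compare yours against. Your reading of the statement is the right one, and worth making explicit: as printed the theorem is vacuous (take $B_n=A_n$), and the missing hypothesis is that the $B_n$ be pairwise disjoint. That disjointness is not cosmetic --- it is exactly what the proof of Theorem \ref{Kumar} needs, since if some $v\in V$ lay in both $B_n$ and $B_m$ with $n\neq m$, the points $v+q_n$ and $v+q_m$ of $B$ would be distinct and at rational distance $|q_n-q_m|$.

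As a proof, however, your proposal has a genuine gap that you yourself flag. Everything up to the last paragraph is a correct but routine reduction: measurable hulls, the reformulation of fullness as meeting every closed positive-measure subset of the hull, and the length-$\mathfrak{c}$ recursion. That recursion succeeds only when $non(\mathcal{N})=\mathfrak{c}$, because then a non-null set such as $A_{n_\xi}\cap F_\xi$ has cardinality $\mathfrak{c}$ and survives the removal of the fewer than $\mathfrak{c}$ points already committed. When $non(\mathcal{N})<\mathfrak{c}$, the committed set $U_\xi$ may become non-null and in principle swallow $A_{n_\xi}\cap F_\xi$; handling this is precisely the content of the theorem, and at that point you ``invoke the Gitik--Shelah construction directly,'' i.e., cite the very result being proved. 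So what you have is an accurate diagnosis of where the difficulty lives together with a proof of the special case $non(\mathcal{N})=\mathfrak{c}$, not a proof of the theorem; in ZFC the delicate step really does require the measure-algebra argument of [2] (or Kumar's presentation of it in [5]), and nothing in the present paper supplies it either.
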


\section{   \fontsize{11pt}{11pt} \MakeUppercase{On the real line}}
\vspace*{0.3cm}

In 2012 Ashutosh Kumar (see [5]) proved that for each $A\subset \mathbb{R}$ there exists an ARD set $B\subset A$ full in $A$. We shall give a short proof of this result.
\begin{theorem}(Kumar)\label{Kumar}
Let $A\subset \RR$ be a set of positive outer measure. Then there exists an $\ARD$ set $B\subset A$ full in $A$.

\end{theorem}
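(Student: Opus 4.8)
The plan is to produce $B$ as a partial selector of the Vitali equivalence relation that nonetheless carries all of the outer measure of $A$. Fix a Vitali set $V$, an enumeration $\QQ=\{q_n:n\in\omega\}$, and a partition $\RR=\bigcup_{k\in\omega}I_k$ into bounded intervals. Since $V$ is a selector, every real has a unique representation $v+q$ with $v\in V$ and $q\in\QQ$, so the sets $A\cap(V+q_n)$ partition $A$, and each one is already an $\ARD$ set: two of its points differ by an element of $V-V$, which meets $\QQ$ only at $0$. The only way a rational distance can arise inside $\bigcup_n\big(A\cap(V+q_n)\big)$ is to keep two points sharing a common representative $v\in V$ but shifted by distinct rationals. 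This is the key reduction: the $\ARD$ condition becomes the requirement that the representatives we retain, viewed as subsets of $V$, be pairwise disjoint across the rational shifts.

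First I would translate the problem into $V$. For $(n,k)\in\omega\times\omega$ put $S_{n,k}=\big(A\cap(V+q_n)\cap I_k\big)-q_n\subseteq V$; these are bounded, hence of finite outer measure, and they reassemble $A$ in the sense that $\bigcup_{n,k}(S_{n,k}+q_n)=A$. I would then apply Theorem~\ref{GS} to the countable family $\{S_{n,k}\}_{(n,k)\in\omega\times\omega}$; its disjoint version provides pairwise disjoint sets $C_{n,k}\subseteq S_{n,k}$ with $\lambda^*(C_{n,k})=\lambda^*(S_{n,k})$, and I set
\[
B=\bigcup_{(n,k)\in\omega\times\omega}\big(C_{n,k}+q_n\big)\subseteq A.
\]
For the $\ARD$ property, take distinct $x,y\in B$, say $x=c+q_n$ and $y=c'+q_m$ with $c,c'\in V$ taken from the relevant sets $C_{n,k}$. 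If $c=c'$ then this common representative would lie in two distinct members of the family $\{C_{n,k}\}$, contradicting disjointness; hence $c\neq c'$, and as distinct elements of $V$ their difference is irrational, so $x-y=(c-c')+(q_n-q_m)$ is irrational as well.

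For fullness I would argue measure-theoretically. Each $C_{n,k}$ has the same \emph{finite} outer measure as $S_{n,k}$, so by the observation recorded above (for a set of finite outer measure, fullness coincides with equality of outer measure) $C_{n,k}$ is full in $S_{n,k}$, and after translating, $C_{n,k}+q_n$ is full in $S_{n,k}+q_n=A\cap(V+q_n)\cap I_k$. Passing to measurable hulls (envelopes), which commute both with translations and with countable unions, a hull of $B$ then coincides with a hull of $A$, which is exactly the statement that $B$ is full in $A$.

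The main obstacle is the gap between what Theorem~\ref{GS} delivers, namely equality of \emph{outer measures}, and what ``full in $A$'' demands, namely matching outer measure against every measurable test set; this is sharpened by the failure of countable additivity of outer measure over the disjoint union defining $B$. I expect to bridge it exactly as sketched: chopping into bounded pieces $I_k$ so that the finite-measure characterization of fullness applies piecewise, and then recombining through measurable hulls, which do respect countable unions. What remains is routine bookkeeping, chiefly checking that each translate $C_{n,k}+q_n$ lands in $A\cap(V+q_n)\cap I_k$ and that disjointness inside $V$ survives the rational shifts.
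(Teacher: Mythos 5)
Your proof follows essentially the same route as the paper's: decompose $A$ along the rational translates of a Vitali set, disjointify with the Gitik--Shelah theorem, and translate back. Your extra steps --- chopping into bounded intervals so that equality of \emph{finite} outer measures yields fullness piecewise before recombining via measurable hulls, and insisting on the pairwise-disjoint form of Theorem~\ref{GS} (which is exactly what the $\ARD$ property needs, and which the paper's statement of that theorem omits) --- are careful elaborations of details the paper compresses into ``it is easy to verify,'' not a different argument.
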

\begin{proof}
Let $V$ be a Vitali set and let us enumerate rationals $\QQ=\{q_n: n\in\omega\}$. For each $n\in\omega$ let:
$$
A_n=\{v\in V: v+q_n\in A\}
$$
By Theorem \ref{GS} let us take a sequence $(B_n)_{n\in\omega}$ such that for each $n$ $B_n\subset A_n$ and $\lambda^*(B_n)=\lambda^*(A_n)$. Since $\bigcup_{n\in\omega}(A_n+q_n)=A$ it is easy to verify that $B=\bigcup_{n\in\omega}(B_n+q_n)$ is the set.

\end{proof}

It is a nontrivial exercise to prove that there exists a Vitali set of full outer measure. We will construct a Vitali set with a bit stronger property.

\begin{theorem}
There exists a Vitali set that is also a Bernstein set.

\end{theorem}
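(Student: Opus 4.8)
The plan is to build $V$ by transfinite recursion of length $\continuum$, deciding the representative of one $\sim$-class at a time and simultaneously forcing both the selector property and the Bernstein property. First I would fix an enumeration $\{P_\alpha : \alpha < \continuum\}$ of all perfect subsets of $\RR$ (there are exactly $\continuum$ of them), and recall that each $\sim$-class is a coset $x+\QQ$, hence countable. The final $V$ will be a set of representatives, one chosen from each class, and the recursion will arrange that for every $\alpha$ both $V$ and $\RR\setminus V$ meet $P_\alpha$.

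The crucial preliminary observation is that every perfect set $P$ meets $\continuum$ distinct $\sim$-classes. Indeed, if $P$ met only $\kappa < \continuum$ of them, then since each class is countable we would have $|P| \le \kappa\cdot\omega = \max(\kappa,\omega) < \continuum$, contradicting $|P|=\continuum$. This counting fact is exactly what keeps the recursion alive.

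At stage $\alpha$, suppose representatives have been chosen for a family $\mathcal{D}_\alpha$ of classes with $|\mathcal{D}_\alpha| \le |\alpha|+\omega < \continuum$. Since $P_\alpha$ meets $\continuum$ classes, I can pick two distinct classes $C, C'$ that meet $P_\alpha$ and do not yet lie in $\mathcal{D}_\alpha$. I would choose a point $x_\alpha \in P_\alpha\cap C$ and declare it the representative of $C$, which puts $x_\alpha \in V\cap P_\alpha$. Then I would pick $y_\alpha \in P_\alpha\cap C'$ and, using that $C'$ is infinite, choose the representative of $C'$ to be some $z_\alpha \in C'\setminus\{y_\alpha\}$; this guarantees $y_\alpha \notin V$, so $y_\alpha \in (\RR\setminus V)\cap P_\alpha$. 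Committing to a representative for each of $C$ and $C'$ immediately, and never revisiting a decided class, ensures no clash with earlier choices. After all $\alpha<\continuum$ are processed, any class still undecided receives an arbitrary representative, so $V$ becomes a full selector, i.e.\ a Vitali set; and since $V\cap P_\alpha \neq \emptyset \neq (\RR\setminus V)\cap P_\alpha$ for every $\alpha$, the set $V$ meets every perfect set yet contains none, so it is also a Bernstein set.

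The main obstacle is not any single step but the bookkeeping that keeps $V$ a genuine selector throughout the recursion: one must verify at every stage that enough classes meeting $P_\alpha$ remain untouched (supplied by the counting observation above) and that assigning a representative never conflicts with a previous assignment (handled by deciding each class once and for all). The only mild subtlety is arranging the ``avoiding'' point $y_\alpha$ to stay out of $V$, which is why I fix the representative $z_\alpha$ of $C'$ to differ from $y_\alpha$ rather than merely forbidding $y_\alpha$ and risking that a countable class eventually has all its points excluded.
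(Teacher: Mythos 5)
Your proof is correct and follows essentially the same strategy as the paper's: a transfinite recursion of length $\continuum$ over an enumeration of all perfect sets, placing one point of $V$ and one point of its complement into each $P_\alpha$ while preserving the selector property via the count that each $\sim$-class is countable whereas each perfect set has cardinality $\continuum$. Your bookkeeping is in fact slightly cleaner: by fixing the representative of the ``avoided'' class $C'$ to be a point $z_\alpha\neq y_\alpha$ at the very moment you exclude $y_\alpha$, you sidestep the paper's need to carry a growing excluded set $\{e_\alpha:\alpha<\xi\}$ and to maintain the invariant that it meets each class at most once.
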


\begin{proof}
Let $\{P_\alpha: \alpha<\mf{c}\}$ be an enumeration of all perfect subsets of $\RR$ and $\{C_\alpha; \alpha<\mf{c}\}$ be an enumeration of all equivalance classes of relation $\sim$. We will construct desired set by transfinite induction.
\\
At the first step we choose arbitrarily $p_0\in P_0$ then $c_0\in C_0$ if $p_0\notin C_0$ (otherwise we set $c_0=p_0$) and $e_0\in P_0$, $e_0\neq p_0, c_0$.
\\
At the step $\xi<\continuum$ let us assume that we have transfinite sequences $(p_\alpha: \alpha<\xi)$, $(c_\alpha: \alpha<\xi)$, $(e_\alpha: \alpha<\xi)$ such that:
\begin{enumerate}
\item $p_\alpha\in P_\alpha$, $c_\alpha\in C_\alpha$, $e_\alpha\in P_\alpha$ for all $\alpha<\xi$,
\item $\{p_\alpha: \alpha<\xi\}\cap \{e_\alpha: \alpha<\xi\}=\emptyset$,
\item $\{c_\alpha: \alpha<\xi\}\cap \{e_\alpha: \alpha<\xi\}=\emptyset$,
\item $|(\{p_\alpha: \alpha<\xi\}\cup \{c_\alpha: \alpha<\xi\})\cap C_\beta|\leq 1$ for each $\beta<\continuum$,
\item $|\{e_\alpha: \alpha<\xi\}\cap C_\beta|\leq 1$ for each $\beta<\continuum$.
\end{enumerate}
We will extend our sequences in such a way that above properties will be preserved. Let us consider the following set:
$$
E_\xi=\{x\in C_\beta: \beta<\continuum, (\{p_\alpha: \alpha<\xi\}\cup \{c_\alpha: \alpha<\xi\})\cap C_\beta\neq\emptyset \}\cup\{e_\alpha: \alpha<\xi\}.
$$
Its cardinality is at most $|\xi|<\mf{c}$ since each class $C_\beta$ is countable and our sequences have a length $\xi$. Every perfect set has the cardinality of $\mf{c}$ so the set $P_\xi\bez E_\xi$ is nonempty. Let us choose then $p_\xi\in P_\xi\bez E_\xi$. If $p_\alpha\in C_\xi$ for some $\alpha\leq\xi$, then set $c_\xi=p_\alpha$, otherwise we choose arbitrarily $c_\xi\in C_\xi\bez \{e_\alpha: \alpha<\xi\}$ (the latter is nonempty since intersection of $C_\xi$ and $\{e_\alpha: \alpha<\xi\}$ is at most one point). Eventually let us consider the following set:
$$
E'_\xi=\{p_\alpha, c_\alpha: \alpha\leq\xi\}\cup\{e_\alpha: \alpha<\xi\}\cup\{x\in C_\beta: \beta<\continuum, \{e_\alpha: \alpha<\xi\}\cap C_\beta\neq\emptyset\}.
$$
Similarly to the previous reasoning it has the cardinality of $|\xi|$, so we may pick $e_\xi\in P_\xi\bez E'_\xi$.
\\
This finishes the construction and $V=\{p_\alpha: \alpha<\mf{c}\}\cup\{c_\alpha: \alpha<\mf{c}\}$ is the set.

\end{proof}

\section{   \fontsize{11pt}{11pt} \MakeUppercase{On the real plane}}
\vspace*{0.3cm}

Since being in rational distance on the plane is not an equivalence relation, the idea behind the Theorem \ref{Kumar} cannot be directly utilized to prove a similar result for the plane and higher dimensions. However, we are not totally helpless and there are some cases in which we are able to give some answers. First, we shall consider the case of measurable sets. Before we proceed let us denote by $d$ a standard Euclidean metric on $\RR^2$, by $B(x, r)$ an open ball with origin $x$ and radius $r$ and let $S(x, r)=\delta B(x, r)$ (the frame of ball $B$ - in $\RR^2$ it is a circle).

\begin{theorem}\label{mierzalne}
Let $A$ be a measurable subset of $\RR^2$ of positive measure. Then there exists an $\ARD$ set $B\subset A$ full in $A$. 

\end{theorem}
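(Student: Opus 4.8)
The plan is to restate the conclusion as a difference-set condition and then separate the easy ``same line'' phenomenon, governed by Theorem \ref{Kumar}, from the genuinely planar one. Writing points of $\RR^2$ as $(x,y)$, a set $B$ is $\ARD$ precisely when
$$(B-B)\cap P=\{(0,0)\},\qquad P=\bigcup_{q\in\QQ,\ q>0}S((0,0),q);$$
the obstruction set $P$ is symmetric and, being a countable union of circles, is a null $F_\sigma$ set. So the goal is to produce $B\subset A$ of full outer measure whose difference set avoids $P$ off the origin.

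A natural first step is to fibre $A$ over the vertical axis. On a single horizontal line the distance between $(x,c)$ and $(x',c)$ is $|x-x'|$, so ``rational distance'' there is exactly the Vitali relation $x-x'\in\QQ$, and Theorem \ref{Kumar} furnishes a one-dimensional full $\ARD$ subset of every slice $A^c$. Since $\lambda(A)=\int\lambda^*(A^c)\,dc$ by Fubini, the real engine for fullness is Theorem \ref{GS}: exactly as in the proof of Theorem \ref{Kumar}, one wants to carry out the measure-preserving selection on countably many pieces at once, rather than level by level, so that the reassembled set keeps full outer measure in $A$.

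The difficulty is entirely in the cross-fibre pairs. For $c\neq c'$ the points $(x,c),(x',c')$ are at rational distance iff $(x-x')^2+(c-c')^2$ is the square of a rational, that is iff $x-x'$ lies in the countable set
$$\Delta_{c,c'}=\bigl\{\pm\sqrt{q^2-(c-c')^2}:q\in\QQ,\ q\geq|c-c'|\bigr\}.$$
Because rational distance in the plane is not transitive there is no quotient trivialising this, and a fibre-by-fibre transfinite recursion breaks down: at a level $c$ the set of abscissae forbidden by the earlier levels $c'$ is $\bigcup_{c'}(B^{c'}+\Delta_{c,c'})$, a union of countably many translates of slices already carrying full one-dimensional outer measure, and such a union can be non-null, destroying fullness. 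This is precisely the covering phenomenon behind the hypothesis $non(\mc{N})=cof(\mc{N})$ in the general planar statement, so the measurability of $A$ has to be spent here.

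My plan to bypass the recursion is to reorganise the continuum many cross-fibre constraints into the countably many constraints ``$B$ realises no pair at distance $q$'', one for each $q\in\QQ$, $q>0$; together with the same-line reduction these say exactly that $B$ is $\ARD$. I would then treat these countably many distance-avoidance requirements simultaneously, using the measurability of $A$ together with Theorem \ref{GS} to preserve full outer measure through all of them at once, in the spirit of the simultaneous selection used on the line. The crux I expect to fight with is the single-distance step — producing, inside a measurable set of positive measure, a full-outer-measure subset realising no pair at a prescribed distance $q$ — and then the bookkeeping that lets Theorem \ref{GS} keep these countably many full subsets compatible, since it is here that measurability must defeat the covering obstruction that blocks the naive construction.
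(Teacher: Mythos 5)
There is a genuine gap: what you have written is a plan whose decisive steps are explicitly left open. You yourself identify the crux --- producing, inside a measurable set of positive measure, a full-outer-measure subset realising no pair at a prescribed distance $q$, and then making countably many such selections compatible --- and you do not carry it out. Moreover, the tool you propose for it cannot do the job. Theorem \ref{GS} only controls outer measure of the selected subsets; it gives no control whatsoever over their difference sets or distance sets. In the proof of Theorem \ref{Kumar} the $\ARD$ property is not delivered by Theorem \ref{GS} at all --- it comes for free because the sets $A_n$ being refined all live inside a single Vitali selector, so distinct representatives are automatically at irrational distance. For a fixed planar distance $q$ there is no analogous algebraic structure, so reorganising the constraints into countably many ``avoid distance $q$'' requirements does not reduce the problem to anything Theorem \ref{GS} can handle. (Also, your fibering step quietly assumes countably many slices, but there are continuum many horizontal lines, so a simultaneous Gitik--Shelah selection over the slices is not even available.)

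You also miss the one place where measurability actually enters, and it is much more elementary than a covering argument. Measurability gives an $F_\sigma$ set $F\subset A$ with $\lambda(F)=\lambda(A)$, so fullness in $A$ only has to be certified against the $\continuum$ many Borel subsets of $F$ of positive measure. One then enumerates these as $\{B_\alpha:\alpha<\continuum\}$ and runs a transfinite induction of length $\continuum$, at stage $\xi$ choosing
$$
b_\xi\in B_\xi\bez\Big(\bigcup_{\alpha<\xi}\bigcup_{q\in\QQ_+}S(b_\alpha,q)\Big),
$$
which is possible because, by Fubini, some vertical slice of $B_\xi$ has cardinality $\continuum$ while each of the fewer than $\continuum$ forbidden circles meets a vertical line in at most two points. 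The resulting $B=\{b_\alpha:\alpha<\continuum\}$ is $\ARD$ by construction and meets every positive-measure Borel subset of $F$, hence is full in $A$. Your difference-set reformulation and the diagnosis of why the fibre-by-fibre recursion fails are both sound observations, but they do not substitute for this (or any) construction.
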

\begin{proof}
Since $A$ is measurable, there exists $F_\sigma$ set $F\subset A$ such that $\lambda(A)=\lambda(F)$. Let $\{B_\alpha: \alpha<\mf{c}\}$ be an enumeration of all Borel subsets of $F$ of positive measure. We shall construct the desired set inductively.
\\
Let us choose point $b_0\in B_0$. Now, let assume that we have already chosen a transfinite sequence $(b_\alpha: \alpha<\xi)$ for $\xi<\continuum$ such that for each $\alpha<\xi$ we have $b_\alpha\in B_\alpha$ and for all $\alpha, \beta<\xi$ we have $d(b_\alpha, b_\beta)\in \QQ$. Let us pick
$$
b_\xi\in B_\xi\bez\big(\bigcup_{\alpha<\xi}\bigcup_{q\in\QQ_+}S(b_\alpha, q)\big).
$$
Such a choice can be made since by the Fubini theorem for each set of positive measure there is a positive set $A_\alpha\subset\RR$ such that for every $x\in A_\alpha$ the set ${B_\alpha}_x$ has the cardinality of $\continuum$ and its intersection with $\bigcup_{\alpha<\xi}\bigcup_{q\in\QQ_+}S(b_\alpha, q)$ has a cardinality of $|\xi|<\continuum$.
\\
This finishes the construction and $B=\{b_\alpha: \alpha<\continuum\}$ is the set.

\end{proof}

It is quite easy to verify that the above theorem holds also in the case of $\RR^n$ for any natural $n$ and the proof remains almost intact.

Peter Komjath (see [4]) proved that the real plane can be colored by countably many colors such a way that each color is an ARD set. It implies that for each set of positive outer measure there exists its ARD subset of positive measure too. With some additional assumptions we may prove a little more surprising result. Let us recall that $f\subset A\subset\RR^2$ is a partial bijection, if there exists a set $A_1\subset \RR$ such that $f: A_1\rightarrow \RR$ is injection and $\{(x, f(x)): x\in A_1\}\subset A$.

\begin{theorem}
Assume that $non(\mathcal{N})=cof(\mathcal{N})$. Let $A\subset \RR^2$ be a set of positive outer measure. Then there exists a partial bijection $f$ that is an $\ARD$ full subset of $A$.

\end{theorem}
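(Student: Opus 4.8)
The plan is to fix a measurable (say $G_\delta$) hull $H\supseteq A$ with $\lambda(H)=\lambda^*(A)$ and to build the graph of $f$ by transfinite recursion of length exactly $\kappa:=non(\mathcal{N})=cof(\mathcal{N})$, adding one point of $A$ at each step. Since the result must be a partial bijection, at stage $\alpha$ I must pick $b_\alpha=(x_\alpha,y_\alpha)\in A$ whose first coordinate differs from every earlier $x_\beta$, whose second coordinate differs from every earlier $y_\beta$, and which lies on none of the circles $S(b_\beta,q)$ for $\beta<\alpha$, $q\in\QQ_+$. The circle condition is exactly the $\ARD$ requirement, since $d(b_\alpha,b_\beta)\in\QQ$ iff $b_\alpha\in S(b_\beta,q)$ for some rational $q$. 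Fullness I would force by prescribing targets $\{D_\alpha:\alpha<\kappa\}$ and demanding $b_\alpha\in D_\alpha$.

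The hypothesis $non(\mathcal{N})=cof(\mathcal{N})=\kappa$ would be used twice, in complementary ways. For fullness I would invoke the fact that the measure algebra of $\RR^2$ admits a $\pi$-base of size $cof(\mathcal{N})$: a family $\{D_\alpha:\alpha<\kappa\}$ of positive Borel subsets of $H$ such that every positive Borel subset of $H$ contains some $D_\alpha$. If the recursion guarantees $b_\alpha\in D_\alpha$ for all $\alpha$, then $B=\{b_\alpha:\alpha<\kappa\}$ meets every positive Borel subset of $H$, so $H$ is a common measurable hull of $A$ and of $B$, and hence $B$ is full in $A$. For the selection itself I would use a Fubini argument for outer measure: as $H$ is a hull of $A$ and $D_\alpha\subseteq H$ we have $\lambda^*(A\cap D_\alpha)=\lambda(D_\alpha)>0$, and slicing a hull of $A\cap D_\alpha$ shows that $G_\alpha=\{x:\lambda^*((A\cap D_\alpha)_x)>0\}$ has positive measure.

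The selection step then runs as follows. Because $|G_\alpha|=\continuum>|\alpha|$, I can choose $x_\alpha\in G_\alpha$ distinct from all earlier first coordinates. For this fixed $x_\alpha$ the forbidden second coordinates — the earlier $y_\beta$ together with the at most two solutions $y$ of $(x_\alpha-x_\beta)^2+(y-y_\beta)^2=q^2$ for each $\beta<\alpha$ and $q\in\QQ_+$ — number at most $|\alpha|\cdot\omega<\kappa$. On the other hand the slice $(A\cap D_\alpha)_{x_\alpha}$ has positive outer measure, hence is not null, hence (this is where $non(\mathcal{N})=\kappa$ enters) has cardinality at least $\kappa$. Since $\kappa>|\alpha|\cdot\omega$, I can pick $y_\alpha$ in that slice avoiding every forbidden value and set $b_\alpha=(x_\alpha,y_\alpha)\in A\cap D_\alpha$, preserving all the invariants. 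The map $f$ with domain $\{x_\alpha:\alpha<\kappa\}$ and $f(x_\alpha)=y_\alpha$ is then the desired $\ARD$ partial bijection, full in $A$.

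The main obstacle, and the reason a naive recursion of length $\continuum$ does not suffice, is the tension between fullness and $\ARD$-dodging. Avoiding the rational-distance circles forces me, at stage $\alpha$, to find a vertical slice of $A\cap D_\alpha$ of cardinality strictly larger than the $|\alpha|$ already-forbidden points; but a positive-outer-measure slice only guarantees cardinality $non(\mathcal{N})$, so the recursion cannot be pushed beyond length $non(\mathcal{N})$. Truncating the recursion to length $\kappa=non(\mathcal{N})$ in turn endangers fullness, since there are $\continuum\ge\kappa$ positive Borel sets to meet; this is rescued precisely because $cof(\mathcal{N})=\kappa$ supplies a $\pi$-base of positive Borel sets of size $\kappa$, so that hitting $\kappa$ well-chosen targets already yields a common hull. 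Pinning down the $\pi$-base fact and the outer-measure Fubini slicing are the two technical points I would verify with care.
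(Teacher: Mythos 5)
Your proposal is essentially the paper's own argument: a transfinite recursion of length $\kappa=non(\mathcal{N})=cof(\mathcal{N})$ over a size-$cof(\mathcal{N})$ dense family of positive-measure targets (the Cicho\'n--Kamburelis--Pawlikowski theorem), at each stage picking a point of $A$ in the current target whose coordinates are fresh and which lies off the $|\alpha|\cdot\omega$ rational-radius circles, with $non(\mathcal{N})=\kappa$ guaranteeing that a positive-outer-measure vertical slice has too many points to be exhausted by the forbidden ones. The one step you single out for verification --- that $\{x:\lambda^*((A\cap D_\alpha)_x)>0\}$ is non-null whenever $\lambda^*(A\cap D_\alpha)>0$ --- is also the least justified point of the paper's proof (asserted there only via ``otherwise $A\cap F_\xi$ would be null''), and your proposed fix of slicing a measurable hull controls the sections of the hull rather than of $A\cap D_\alpha$ itself, so on that point you are exactly as (in)complete as the source.
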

\begin{proof}
Let us assume that $non(\mathcal{N})=cof(\mathcal{N})=\kappa$. To construct a full subset $B$ of $A$ we only have to make sure that $B$ intersects $X\cap A$ for each $X$ of positive measure such that $\lambda^*(X\cap A)>0$. Since $cof(\mathcal{N})=\kappa$ and thanks to result of Cichoń, Kamburelis and Pawlikowski (see [1]) we have to consider only $\kappa$ sets of positive measure. Let $\{F_\alpha: \alpha<\kappa\}$ be an enumeration of "testing" sets of positive measure such that their intersections with $A$ have positive outer measure. We shall construct desired function $f$ via the transfinite induction.
\\
$A\cap F_0$ has a positive outer measure so there exists a set $A_0\subset \RR$ of positive outer measure such that for each $x\in A_0$ we have that $(A\cap F_0)_x$ also has a positive outer measure (otherwise $A\cap F_0$ would be null). Let us choose $x_0\in A_0$ and $y_0\in (A\cap F_0)_{x_0}$.
\\
Next, assume that we have a transfinite sequence of pairs $((x_\alpha, y_\alpha))_{\alpha<\xi}$ for $\xi<\kappa$ such that for each $\alpha<\xi$ we have $(x_\alpha, y_\alpha)\in A\cap F_\alpha$ and $\{(x_\alpha, y_\alpha): \alpha<\xi\}$ is an ARD set and a partial bijection. Let us see if we can extend our sequence and preserve these properties. Again, $A\cap F_\xi$ has a positive outer measure so there exists a set $A_\xi\subset \RR$ of positive outer measure such that for each $x\in A_\xi$ we have that $(A\cap F_\xi)_x$ also has a positive outer measure. Furthermore, since $non(\mc{N})=\kappa$ and $\xi<\kappa$, $A_\xi\bez \{x_\alpha: \alpha<\xi\}$ also has a positive outer measure and so has $\Big(A\cap F_\xi\bez \big(\bigcup_{\alpha<\xi}\bigcup_{q\in\QQ_+}S((x_\alpha, y_\alpha), q)\big)\Big)_x$ for each $x\in A_\xi\bez \{x_\alpha: \alpha<\xi\}$. It means, that we may pick $x_\xi\in A_\xi\bez \{x_\alpha: \alpha<\xi\}$, $y_\xi\in \Big(A\cap F_\xi\bez \big(\bigcup_{\alpha<\xi}\bigcup_{q\in\QQ_+}S((x_\alpha, y_\alpha), q)\big)\Big)_{x_\xi}$ and $(x_\xi, y_\xi)$ extends our sequence in the desired way.
\\
This completes the construction and $f=\{(x_\alpha. y_\alpha): \alpha<\kappa\}$ is the set.

\end{proof}

As a concluding remarks let us notice that due to the category analogue of the Theorem \ref{GS} and a general similarities between $\mc{N}$ and $\mc{M}$ we may prove analogue results for the case of category. The notion used in Theorems \ref{Kumar} and \ref{mierzalne} "$A\subset B$ is full in $B$" should be replaced by "$A$ is nonmeager in B" which means that whenever $X\cap B$ is not meager, then also $X\cap A$ is not meager for every set $X$ possessing the Baire property. The rest of the results holds by replacing $\mc{N}$ with $\mc{M}$.
\\
%
%


\begin{center}
{\large\bf References}
\end{center}

[1] J. Cichoń, A. Kamburelis, J.Pawlikowski, On dense subsets of the measure algebra, Proceedings of the American Mathematical Society, Vol. 84, No 1, 1985, pp. 142-146

[2] M. Gitik and S. Shelah, More on simple forcing notions and forcings with ideals, Annals of
Pure and Applied Logic, Vol. 59, 1993, pp. 219-238.

[3] T. Jech, Set Theory, millennium edition, Springer Monographs in Mathematics, Springer-Verlag, 2003.

[4] P. Komjath, A coloring result for the plane, Journal of Applied Analysis, Vol. 5, 1999, pp. 113–117.

[5] A. Kumar, Avoiding rational distances, Real Analysis Exchange, Vol. 38(2), 2012/2013, pp. 493-498.

\end{document}